\newcommand{\N}{\ensuremath{\mathbb{N}}}
\newcommand{\Z}{\ensuremath{\mathbb{Z}}}
\newtheorem{theorem}{Theorem}[section]
\newtheorem{lemma}[theorem]{Lemma}
\newtheorem{conjecture}[theorem]{Conjecture}
\newcounter{claim}
 \newenvironment{proof}[1][]%
 {\noindent {\setcounter{claim}{0}\sc proof ---
    }{#1}{}}{\hfill$\Box$\vspace{2ex}}
\title{On the structure of self-complementary graphs}
\author{Nicolas  Trotignon\thanks{Laboratoire  Leibniz,  IMAG,  46  av
  F\'elix      Viallet,      38041 cedex,      Grenoble,      France.\newline
  \protect\rule{1.8em}{0cm}nicolas.trotignon@imag.fr} }
\date{April 6 2005}
\begin{document}

\maketitle
{\small
\noindent{\bf Abstract:} A  \emph{self-complementary} graph is a graph
isomorphic  to its  complement.  An  isomorphism between  $G$  and its
complement,  viewed as  a permutation  of  $V(G)$, is  then called  an
\emph{antimorphism}.  A \emph{skew partition} of $G$ is a partition of
$V(G)$ into 4~sets $A,B,C,D$ such  that there is no edge between $A,B$
and every possible edge between $C,D$. A \emph{symmetric partition} of
$G$ is a partition of $V(G)$  into 4~sets $A,B,C,D$ such that there is
no edge  between $A, D$, no  edge between $B, C$,  every possible edge
between $A,B$ and every possible edge between $C,D$.

We  give  a  new  proof  of  a theorem  of  Gibbs  saying  that  every
self-complementary graph  on $4k$ vertices  has $k$ disjoint  paths on
$4$  vertices  as  induced   subgraph.   This  new  proof  gives  more
structural  information than  the  original one.   We conjecture  that
every self-complementary graph on  $4k$ vertices either has an induced
cycle on  5~vertices, or a  skew partition, or a  symmetric partition.
The new proof  of Gibb's theorem yields a proof  of the conjecture for
the self-complementary  graphs that have  an antimorphism that  is the
product of a two circular permutations, one of them of length~4.  }

\section{Introduction}

In  this paper  graphs  are  simple, non-oriented,  with  no loop  and
finite.  Several definitions that can  be found in most handbooks (for
instance~\cite{diestel:graph}  for graphs  and \cite{garey.johnson:np}
for algorithms) will not be given.  We also refer the reader to a very
complete    survey    on     self-complementary    graphs    due    to
Farrugia~\cite{farrugia:these}.

If $G$ is a graph, we  denote the complement of $G$ by $\overline{G}$.
A graph is  said to be \emph{self-complementary} if  $G$ is isomorphic
to its complement $\overline{G}$. We will often write ``sc-graph'' for
``self-complementary graph''.  It is very  easy to construct a  lot of
examples of  sc-graphs: take any  graph $G$, and  consider 2~copies of
$G$  say  $G_1,  G_2$,   and  2~copies  of  $\overline{G}$  say  $G_3,
G_4$. Then join every vertex of  $G_1$ to every vertex of $G_3$, every
vertex of $G_3$ to every vertex  of $G_4$ and every vertex of $G_4$ to
every vertex of  $G_2$. The graph obtained is  self-complementary, and
we    call    it   the    graph    obtained    from    $G$   by    the
\emph{$P_4$-construction}.
 
\begin{figure}[bh]
  \center
  \includegraphics{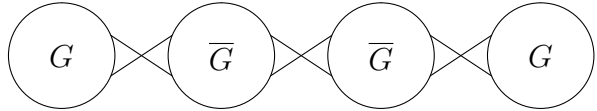}
  \caption{The $P_4$-construction applied to $G$}
\end {figure}

An   important   problem   in   algorithmic  graph   theory   is   the
\emph{isomorphism problem},  known to  be difficult and  unsettled: is
there a polynomial time algorithm  that decides whether two graphs are
isomorphic~? If there is one,  then we can easily decide in polynomial
time if a  graph is self-complementary, by just  running the algorithm
on          $G,          \overline{G}$.           Colbourn          et
al.~\cite{colbourn.colbourn:autoiso}  studied the converse  and proved
that  the  recognition  of sc-graphs  is  \emph{isomorphism-complete}.
That is: if there exists a polynomial time algorithm that decides if a
graph  is  self-complementary, then  there  exists  a polynomial  time
algorithm for the isomorphism problem.   The result of Colbourn et al.
is  not  so surprising  because  of  the $P_4$-construction  described
above.  Consider two graphs $G, H$. Consider $G_1, G_2$, two copies of
$G$ and $H_1,  H_2$ two copies of $\overline{H}$.   Construct (like in
the $P_4$-construction) a  new graph $J$ : join  every vertex of $G_1$
to every  vertex of $H_1$,  every vertex of  $H_1$ to every  vertex of
$H_2$, and every vertex of $H_2$  to every vertex of $G_2$.  To decide
if $J$ is  self-complementary, the obvious way is to  decide if $G, H$
are isomorphic.  The result of Colbourn  et al.  says that there is no
better  way in  general.  So,  it  is to  be feared  that despite  (or
because of) formal equivalence, a study of the properties of sc-graphs
will not help in solving the isomorphism problem.

However, the structure of sc-graphs is worth investigating for its own
interest  and because particular  sc-graphs have  sometime interesting
properties,   as   being    smallest   counter-examples   to   several
conjectures\footnote{$C_5$   is   the   smallest   non-perfect   graph
see~\cite{livre:perfectgraphs},   $L(K_{3,3}\setminus   e)$   is   the
smallest  perfect graph  with no  even pair  and no  even pair  in its
complement,   see~\cite{everett.f.l.m.p.r:ep}.     There   are   other
examples.}.  It could  also help for a general  construction for every
sc-graph, or at least for  some substantial subclasses.  Note that the
$P_4$-construction is not  a good candidate: in a graph  on at least 8
vertices obtained  by the $P_4$-construction, every  vertex has degree
at least 2, and  on figure~\ref{fig8} page~\pageref{fig8}, there is an
sc-graph  with   a  vertex  of  degree   one.   Moreover,  recognition
algorithms for  special classes of sc-graphs can  be drasticaly easier
than the isomorphism problem for  the same class.  For instance, it is
easy to  see that  the only triangle-free  sc-graphs are  the isolated
vertex, $P_4$ and $C_5$, because by the Ramsey's famous Theorem, every
graph on  at least 6~vertices  has a triangle  or the complement  of a
triangle.   Thus, recognizing  triangle-free sc-graphs  is  trivial in
constant time  while the isomorphism problem  for triangle-free graphs
is difficult.  It  might be possible to recognize  special non-trivial
classes of  sc-graphs in polynomial  time.  After reading  this paper,
the  reader will maybe  want to  look for  a general  construction for
$C_5$-free sc-graphs, and  why not for a recognition  algorithm (he or
she must be warned that most of the work is still to be done\dots).

In this  paper, we aim  at structural properties of  sc-graphs, saying
something like: every sc-graph either contains some prescribed induced
subgraph  or can  be  partitioned  into sets  of  vertices with  some
prescribed adjacencies.   There are really  few such results.   In his
master's  thesis  that surveys  more  than  400  papers on  sc-graphs,
Farrugia~\cite{farrugia:these} mentions only one theorem due to Gibbs:

\begin{theorem}[Gibbs, \cite{gibbs:sc}]
  \label{th:gibbs}
  An sc-graph on $4k$ vertices contains $k$ disjoint induced $P_4$'s.
\end{theorem}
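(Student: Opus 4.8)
The plan is to fix an antimorphism $\sigma$ of $G$ and exploit the cycle decomposition of $\sigma$. One checks that $\sigma^2$ is an automorphism of $G$, and more generally $\sigma^t$ is an antimorphism for $t$ odd and an automorphism for $t$ even. I would first re-derive the well-known fact that, $n=4k$ being even, every cycle of $\sigma$ has length divisible by $4$: an antimorphism has no transposition (a swapped pair $\{x,y\}$ would satisfy $xy\in E(G)\iff xy\notin E(G)$) and at most one fixed point (on a fixed set of size $\ge 2$ the induced subgraph would equal its own complement); hence $\sigma$ has no cycle of length $\equiv 2\pmod 4$ (else a suitable odd power of $\sigma$ would be an antimorphism with a swapped pair) and no odd cycle of length $\ge 3$ (else $\sigma^\ell$ would be an antimorphism with $\ell\ge 3$ fixed points), and no fixed point at all (since there is at most one and the others have length divisible by $4$, a fixed point would force $n\equiv 1\pmod 4$). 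It therefore suffices to prove: \emph{if $C$ is a cycle of $\sigma$, of length $4m$, then $V(C)$ can be partitioned into $m$ quadruples each inducing a $P_4$ in $G$} -- the union over all cycles of $\sigma$ then gives $k=\sum m_i$ pairwise disjoint induced $P_4$'s.

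Next I would reduce to the case where $m$ is a power of $2$. Writing $m=2^a c$ with $c$ odd, the map $\sigma^c$ is an antimorphism of $G$, and on $C$ (a single cycle of length $4m=2^{a+2}c$) it splits as a product of $c$ cycles, each of length $2^{a+2}=4\cdot 2^a$, on each of which $\sigma^c$ acts as one full cycle. Each such sub-cycle, with the antimorphism it inherits from $\sigma^c$, is again an instance of the same problem with parameter $2^a$, and the $c$ sub-cycles partition $V(C)$; so one may assume $m=2^a$.

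For a cycle of length $4m$ with $m=2^a$, I would label its vertices $v_0,\dots,v_{4m-1}$ with $\sigma(v_i)=v_{i+1}$ (indices mod $4m$), so that $v_iv_j\in E(G)\iff v_{i+1}v_{j+1}\notin E(G)$; putting $\epsilon_d=1$ iff $v_1v_{1+d}\in E(G)$ for $1\le d\le 2m$, this gives $v_iv_{i+d}\in E(G)\iff i\equiv\epsilon_d\pmod 2$. Group the vertices into the $2m$ blocks $P_s=\{v_{2s},v_{2s+1}\}$, $s\in\Z_{2m}$. As the automorphism $\sigma^2$ carries $P_s$ onto $P_{s+1}$, whether $G[P_s\cup P_t]$ induces a $P_4$ depends only on the circular distance $\rho(s,t)\in\{1,\dots,m\}$ of $s,t$ in $\Z_{2m}$; and a routine inspection of the six possible edges shows that a distance $\rho<m$ is good iff $\epsilon_{2\rho-1}=\epsilon_1\ne\epsilon_{2\rho+1}$, that distance $m$ is good iff $\epsilon_1=\epsilon_{2m-1}$, and that a three-edge quadruple occurring this way is genuinely a $P_4$ (never $K_{1,3}$ or $K_3\cup K_1$), because its two within-block edges $v_{2s}v_{2s+1}$ and $v_{2t}v_{2t+1}$ are always simultaneously present or absent. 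Hence the set $S\subseteq\{1,\dots,m\}$ of good distances is nonempty: if every $\epsilon_{2\rho-1}$ equals $\epsilon_1$ then $m\in S$, and otherwise, for the least $\rho$ with $\epsilon_{2\rho-1}\ne\epsilon_1$, the distance $\rho-1$ is good. Finally, a perfect matching of the circulant graph $H$ on $\Z_{2m}$ with connection set $S$ is precisely a partition of $V(C)$ into $m$ quadruples $P_s\cup P_t$, each an induced $P_4$; and such a matching exists: if $m\in S$ take $\{\,\{s,s+m\}:s\in\Z_{2m}\,\}$, and otherwise take any $\rho\in S$ (necessarily $<m$) -- then $g=\gcd(\rho,2m)$ is a power of $2$ at most $\rho<m$, hence $g\mid m$, so $2m/g$ is even and the distance-$\rho$ edges of $H$ form a disjoint union of even cycles, in each of which one picks alternate edges.

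The step I expect to be the real obstacle is this last one -- packing a single $4m$-cycle into $P_4$'s when $m$ is even. For $m$ odd the reduction already finishes the job at once, because then $\sigma^m$ is an antimorphism of order $4$ splitting the cycle into $m$ quadruples, each inducing the unique self-complementary graph on four vertices, namely $P_4$. The content lies entirely in the even case, where one must both isolate the right candidate quadruples (unions of two blocks $P_s$, which $\sigma^2$ permutes cyclically) and verify that the resulting ``compatibility'' circulant $H$ is nonempty and, because $m$ is a power of $2$, has a perfect matching.
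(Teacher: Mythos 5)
Your proof is correct, but it takes a genuinely different route from the paper's. The paper cites Gibbs' Theorem~\ref{th:po2} to reduce to cycles of $2$-power length and then invokes Lemma~\ref{l:base}: a counting argument on $|N(a_1)\cap B|$ versus $|\overline{N}(a_1)\cap B|$ along the four ``tracks'' $A,B,C,D$ of a $4k$-cycle produces either a symmetric partition (whence the quadruples $\{a_i,b_i,c_i,d_i\}$) or a $P_4$ of the form $\{a_1,b_i,a_{1+j},b_{i+j}\}$ whose $\tau^4$-translates tile $A\cup B$, with $C\cup D$ handled by applying $\tau^2$. You instead make the argument self-contained (re-deriving the Sachs--Ringel cycle-length facts and the reduction to $2$-power cycles via $\sigma^c$, rather than citing them), group each cycle into consecutive pairs $P_s=\{v_{2s},v_{2s+1}\}$ permuted cyclically by the automorphism $\sigma^2$, encode all adjacencies by the parities $\epsilon_d$, and locate a ``good distance'' by a minimality argument rather than by counting; I checked the parity bookkeeping (including the collapse of the criterion at distance $m$, where $\epsilon_{2m+1}=1-\epsilon_{2m-1}$) and it is right. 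Both proofs then rest on the same combinatorial kernel --- partitioning a cyclic group of $2$-power order into pairs at a fixed nonzero difference --- which the paper handles by induction on the exponent plus alternate edges of a Hamiltonian cycle, and you handle in one stroke via $\gcd(\rho,2m)$ and alternate edges of even cycles. What each buys: yours is fully self-contained and isolates cleanly why the $2$-power hypothesis is the only real obstruction (your odd-$m$ aside, where $\sigma^m$ alone finishes, is a nice observation); the paper's version, though it leans on two cited results, produces as a by-product the symmetric-partition alternative of Lemma~\ref{l:base}, which is exactly the structural information exploited in Section~\ref{sec:conj} and which your argument does not yield.
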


As pointed out  by Farrugia, the theorem above  has two major defaults
in view  of algorithmic applications.  First, the  problem of deciding
whether  the  vertices of  a  graph can  be  partitioned  into sets  of
4~vertices, each of  them inducing a $P_4$, is  NP-complete (proved by
Kirkpatrick and  Hell,~\cite{kirkpatrick.hell:gm}).  Secondly, even if
the  partition  into  $P_4$'s  of  an  sc-graph  is  obtained  by  any
unexcepected mean, it will be  of no use for recursion, since removing
blindly one or some  of the $P_4$'s may yield a graph  that is no more
self-complementary  and  that  will  have  in  general  no  forseeable
properties.

We will  investigate structural properties  of sc-graphs that  fix the
first default:  the structures  that we will  find (or  conjecture) in
sc-graphs will  be detectable in polynomial  time.  Unfortunately, our
results (and conjectures) will still  have the second default: we will
be able to break several  sc-graphs into pieces with special adjacency
properties, but without garanteeing  any hereditary properties on these
pieces.

We will first give a new proof  of the theorem of Gibbs, that yields a
slightly  different  result  and  gives  more  structural  information
(Section~\ref{sec:gibbs}).  This will allow us to prove a special case
of a  conjecture: every  sc-graph on $4k$  vertices either  contains a
$C_5$ as an induced subgraph or can be broken in 4 pieces with special
adjacencies properties (Section~\ref{sec:conj}).  Page~\pageref{fig8},
we show a picture of all the sc-graphs on 8 vertices.

\begin{figure}[h]
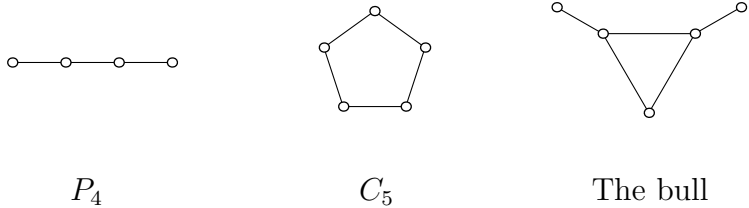

  \center
  \begin{tabular}{ccccc}
  \parbox[c]{0cm}{\rule{0cm}{3cm}}
  \includegraphics{fig.autocomp.11}&\rule{1cm}{0cm}&
  \parbox[c]{2cm}{\includegraphics{fig.autocomp.12}}&\rule{.2cm}{0cm}&
  \parbox[c]{3.3cm}{\includegraphics{fig.autocomp.13}}\\
   $P_4$&& \hspace{-.6cm} $C_5$&& \hspace{-.65cm}The bull
  \end{tabular}
\caption{The 3 sc-graphs on 4 or 5 vertices\label{base.fig.45}}
\end{figure}

\section{A new proof of Gibb's theorem}
\label{sec:gibbs}

If $G$  is a  graph, we  denote by $V(G)$  the vertex  set of  $G$, by
$E(G)$ the edge  set of $G$.  If $A\subset V(G)$,  we denote by $G[A]$
the subgraph  of $G$ induced  by~$A$.  If $v$  is a vertex of  $G$, we
denote by  $N(v)$ the  set of the  neighbours of  $v$ .  We  denote by
$\overline{N}(v)$  the set of  the non-neighbours  of $v$.   Note that
$v\in \overline{N}(v)$.  If $uv \in E(G)$, we say that $u$ \emph{sees}
$v$, and if $uv \notin E(G)$, we say that $u$ \emph{misses} $v$.

By the  definition, a graph $G$  is self-complementary if  and only if
there exists  a bijection $\tau$ from  $V(G)$ to $V(G)$  such that for
every pair $\{a,b\}$  of distinct vertices we have:  $\{a,b\} \in E(G)
\Leftrightarrow \{\tau(a),  \tau(b) \} \notin E(G)$.   Such a function
$\tau$ is called an \emph{antimorphism} of $G$.

Sachs~\cite{sachs:sc}  and  Ringel~\cite{ringel:sc}  proved  that  any
antimorphism is  a product of circular permutations  whose lengths are
all multiples  of~4, except possibly  for one of length~1.   Note that
this implies a well known fact:  the number of vertices of an sc-graph
is equal  to 0 or  1 modulo~4.  Gibbs~\cite{gibbs:sc} also  proved the
following:

\begin{theorem}[Gibbs~\cite{gibbs:sc}]
  \label{th:po2}
  If $G$ is  an sc-graph, then there exists  an antimorphism $\tau$ of
  $G$  such that  every circular  permutation of  $\tau$ has  length a
  power of~2.
\end{theorem}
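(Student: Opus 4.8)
The plan is to start from any antimorphism and replace it by a suitable odd power of itself. First I would record the elementary fact that if $\tau$ is an antimorphism then $\tau^{2}$ is an automorphism, since applying the exchange ``edge $\leftrightarrow$ non-edge'' twice gives $\{a,b\}\in E(G)\Leftrightarrow\{\tau^{2}(a),\tau^{2}(b)\}\in E(G)$ for all distinct $a,b$. Hence for every odd $j$ the permutation $\tau^{j}=(\tau^{2})^{(j-1)/2}\circ\tau$ is an automorphism composed with an antimorphism, so it is again an antimorphism; in particular all of $\tau,\tau^{3},\tau^{5},\dots$ are antimorphisms of $G$, and I am free to pick the one whose cycle structure I want.

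Second, I would choose the exponent. Since $G$ is self-complementary it has an antimorphism $\tau$; write its cycle lengths as $\ell_{1},\dots,\ell_{r}$ and factor each as $\ell_{i}=2^{a_{i}}b_{i}$ with $b_{i}$ odd, and set $L=\mathrm{lcm}(b_{1},\dots,b_{r})$. Then $L$ is odd, so $\tau^{L}$ is an antimorphism by the first step. Now I would use the standard fact that under $\sigma^{k}$ a cycle of a permutation $\sigma$ of length $\ell$ breaks into $\gcd(k,\ell)$ cycles of length $\ell/\gcd(k,\ell)$. Computing prime by prime, $\gcd(L,\ell_{i})=b_{i}$, because $L$ is odd and $b_{i}\mid L$; therefore each length-$\ell_{i}$ cycle of $\tau$ produces $b_{i}$ cycles of $\tau^{L}$, each of length $2^{a_{i}}$, a power of $2$. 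Thus $\tau^{L}$ is the antimorphism required by Theorem~\ref{th:po2}.

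I do not expect a real obstacle: the only delicate points are checking that odd powers preserve the antimorphism property (handled above through $\tau^{2}$) and the $\gcd$ bookkeeping, together with the trivial remark that an exceptional $1$-cycle has $b_{i}=2^{a_{i}}=1$ and is simply left alone. It is worth noting that the Sachs--Ringel information quoted above (all cycle lengths are multiples of $4$, except one possibly of length $1$) is not actually needed for this particular statement: the argument above works for an arbitrary antimorphism, and it merely offers the reassurance that the numbers $2^{a_{i}}$ one obtains are at least $4$ in the non-exceptional cycles. Finally, this structured antimorphism is exactly the starting point for the announced new proof of Gibbs' Theorem~\ref{th:gibbs}: once $\tau$ splits $V(G)$ into orbits whose sizes are powers of $2$, each orbit of size $2^{a}$ with $a\ge 2$ can be partitioned into consecutive blocks of four vertices, and one then analyses how these blocks sit inside $G$ to extract the disjoint induced $P_{4}$'s.
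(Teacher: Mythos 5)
Your proof is correct: $\tau^{2}$ is indeed an automorphism, so every odd power of $\tau$ is again an antimorphism, and the $\gcd$ computation showing that $\tau^{L}$ with $L=\mathrm{lcm}(b_{1},\dots,b_{r})$ has only cycles of length $2^{a_{i}}$ is right, with the $1$-cycles handled as you say. Note that the paper states Theorem~\ref{th:po2} without proof, merely citing Gibbs, so there is no in-paper argument to compare against; your odd-power argument is the standard one (essentially Gibbs's original), it is complete as written, and you are also right that the Sachs--Ringel cycle-length information is not needed for it.
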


It  is convenient  to denote  by $(a_1  a_2 \dots  a_k)$  the circular
permutation of $\{a_1  a_2 \dots a_k\}$ that maps  $a_i$ to $a_{i+1}$,
where  the addition of  the subscripts  is taken  modulo $k$.   When a
circular permutation has length $4k$,  we often denote it by $(a_1 b_1
c_1 d_1  a_2 b_2  c_2 d_2  \dots a_k b_k  c_k d_k)$.   Implicitly, the
subscripts are  then taken modulo  $k$ (for instance $a_{k+3}  = a_3$,
$d_0 = d_k$,~\dots).

We recall here a lemma  used by Gibbs to prove Theorem~\ref{th:gibbs}.
We give his proof with our notation.
 
\begin{lemma}[Gibbs~\cite{gibbs:sc}]
  \label{l:gibbs}
  Let $k\geq  1$ be  an integer  and let $G$  be an  sc-graph  with an
  antimorphism $\tau = (a_1 b_1 c_1  d_1 a_2 b_2 c_2 d_2 \dots a_k b_k
  c_k d_k)(\dots) \cdots (\dots)$. Then either:

  \begin{itemize}
  \item 
    There exists $i\in \N$ such  that $\{a_1, b_1, a_i, b_i\}$ induces
    a $P_4$ for which $(a_1 b_1 a_i b_i)$ is an antimorphism.

  \item 
    There exists $i \in \N$  such that $\{a_1, b_1, c_i, d_i\}$ induces
    a $P_4$ for which $(a_1 b_1 c_i d_i)$ is an antimorphism.
  \end{itemize}
\end{lemma}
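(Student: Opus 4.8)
The plan is to exploit the rigidity that ``$\tau$ is an antimorphism'' imposes on the adjacencies among the cycle vertices $a_1,b_1,c_1,d_1,\dots,a_k,b_k,c_k,d_k$, and then to pick out the required $P_4$ by a short pigeonhole argument. Write the cycle as $v_0v_1\cdots v_{4k-1}$, where $v_{4i-4}=a_i$, $v_{4i-3}=b_i$, $v_{4i-2}=c_i$, $v_{4i-1}=d_i$, so that $\tau(v_j)=v_{j+1}$ with subscripts read modulo~$4k$. Because $\tau$ is an antimorphism, $v_jv_l\in E(G)$ holds if and only if $v_{j+1}v_{l+1}\notin E(G)$; iterating, whether $v_jv_l$ is an edge depends only on $l-j$ modulo $4k$ and on the parity of~$j$. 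I encode this by the function $\phi$ with $\phi(\delta)=1$ if $v_0v_\delta\in E(G)$ and $\phi(\delta)=0$ otherwise, defined for $\delta\not\equiv0\pmod{4k}$. Since $\tau$ maps the pair $\{v_{4k-1},v_0\}$ onto $\{v_0,v_1\}$, the antimorphism property forces $v_0v_{4k-1}\in E(G)$ exactly when $v_0v_1\notin E(G)$, so $\phi(1)$ and $\phi(4k-1)$ are distinct. The statement of the lemma is unchanged if $G$ is replaced by $\overline G$ (this keeps $\tau$ an antimorphism, and leaves invariant both ``induces a $P_4$'' and ``admits a given $4$-cycle as antimorphism''), so I may assume $\phi(1)=1$ and hence $\phi(4k-1)=0$.

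Next I turn the two alternatives of the lemma into conditions on $\phi$. Fix $i$ with $2\le i\le k$ and look at the set $\{a_1,b_1,a_i,b_i\}=\{v_0,v_1,v_{4i-4},v_{4i-3}\}$ with the candidate antimorphism $\sigma=(v_0\,v_1\,v_{4i-4}\,v_{4i-3})$. Of the six adjacency conditions that $\sigma$ must satisfy to be an antimorphism of this induced subgraph, $\sigma$ permutes four of them cyclically --- the ``boundary'' pairs $\{v_0,v_1\}$, $\{v_1,v_{4i-4}\}$, $\{v_{4i-4},v_{4i-3}\}$, $\{v_{4i-3},v_0\}$ --- and swaps the remaining two --- the ``diagonal'' pairs $\{v_0,v_{4i-4}\}$ and $\{v_1,v_{4i-3}\}$. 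The diagonal condition (exactly one of $v_0v_{4i-4}$, $v_1v_{4i-3}$ is an edge) holds automatically, precisely because $\tau$ sends one diagonal pair onto the other and is an antimorphism --- this is the single point at which the antimorphism hypothesis, rather than the bare statement ``these four vertices induce a $P_4$'', is used. Among the boundary pairs, $v_0v_1$ and $v_{4i-4}v_{4i-3}=a_ib_i$ are ``opposite'' and both are edges (the latter because $\phi(1)=1$), so the required alternation around the boundary $4$-cycle amounts to $v_1v_{4i-4}\notin E(G)$ and $v_{4i-3}v_0\notin E(G)$, that is, to $\phi(4i-5)=1$ and $\phi(4i-3)=0$. (A four-vertex graph admitting a $4$-cycle antimorphism has exactly three edges and must be a $P_4$, so once $\sigma$ is an antimorphism the ``$P_4$'' part is automatic.) The same computation for $\{a_1,b_1,c_i,d_i\}=\{v_0,v_1,v_{4i-2},v_{4i-1}\}$ and $\sigma=(v_0\,v_1\,v_{4i-2}\,v_{4i-1})$, now for $1\le i\le k$, shows the second alternative holds at index $i$ exactly when $\phi(4i-3)=1$ and $\phi(4i-1)=0$.

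Finally, the pigeonhole. As $i$ runs over $\{2,\dots,k\}$ in the first alternative and over $\{1,\dots,k\}$ in the second, the pairs $(4i-5,4i-3)$ and $(4i-3,4i-1)$ between them list every pair of the form $(\delta,\delta+2)$ with $\delta$ odd and $1\le\delta\le4k-3$. So it suffices to find an odd $\delta$ in that range with $\phi(\delta)=1$ and $\phi(\delta+2)=0$. Now the binary sequence $\phi(1),\phi(3),\phi(5),\dots,\phi(4k-1)$ starts at $\phi(1)=1$ and ends at $\phi(4k-1)=0$; letting $\delta$ be the largest odd index at which the sequence equals~$1$, we get $\delta\le4k-3$ and $\phi(\delta+2)=0$. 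Reading $\delta$ modulo~$4$ --- $\delta\equiv3$ yielding the first alternative with $i=(\delta+5)/4$, and $\delta\equiv1$ yielding the second with $i=(\delta+3)/4$ --- produces the desired induced $P_4$.

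The step I expect to be the main nuisance is the middle one: keeping the index arithmetic straight simultaneously modulo~$4$ (to know which of $a_i,b_i,c_i,d_i$ a vertex $v_j$ is) and modulo~$4k$ (so that the wrap-around value $\phi(4k-1)$ is correctly seen to equal $1-\phi(1)$), and especially recognising that it is exactly the diagonal condition that comes for free. Granting that, the reduction to a two-term pattern in $\phi$ and the pigeonhole closing the argument are both immediate.
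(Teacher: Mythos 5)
Your proof is correct and is essentially the paper's argument in different clothing: your $\phi$ at odd arguments is exactly the sequence of adjacencies $a_1b_1, a_1d_1, a_1b_2, a_1d_2, \dots, a_1d_k$, your observation that $\phi(4k-1)=1-\phi(1)$ is the paper's remark that $a_1$ sees $d_k$ once it misses $b_1$, and locating the last $1$ in that sequence (versus the paper's first switch) is the same pigeonhole, with the two residues of $\delta$ modulo $4$ matching the paper's two cases. The index arithmetic and the observation that the diagonal condition comes for free from $\tau$ all check out.
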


\begin{proof}
  Let us  suppose without loss  of generality that $a_1$  misses $b_1$
  (if not, we may replay  the same proof in $\overline{G}$).  Applying
  $\tau^{-1}$,  we know  that $a_1$  sees  $d_k$.  So  there exists  a
  smallest integer  $i>1$ such  that: $a_1$ sees  $b_i$ or  $a_1$ sees
  $d_i$.

  If $a_1$  sees $b_i$ then,  $i\geq 2$.  Applying  $\tau^{4(i-1)}$ to
  $a_1$ and $b_1$, we know that $a_i$ misses $b_i$.  By the definition
  of $i$, we know that  $a_1$ misses $d_{i-1}$. Thus, applying $\tau$,
  $b_1$ sees $a_i$.  If $a_1$ sees $a_i$, then, applying $\tau$, $b_1$
  misses $b_i$  and $\{a_1, b_1,  a_i, b_i\}$ induces $P_4$  for which
  $(a_1 b_1  a_i b_i)$ is an  antimorphism. In the same  way, if $a_1$
  misses  $a_i$,  then  $b_1$  sees   $b_i$  and  we  reach  the  same
  conclusion.

  If $a_1$  misses $b_i$,  then by the  definition of $i$,  $a_1$ sees
  $d_i$.  Applying $\tau^{4(i-1)+2}$ to  $a_1$ and $b_1$, we know that
  $c_i$ misses  $d_i$.  Applying  $\tau$ to $a_1$  and $b_i$,  we know
  that $b_1$ sees $c_i$.  If  $a_1$ sees $c_i$, applying $\tau$, $b_1$
  misses $d_i$ and $\{a_1, b_1, c_i, d_i\}$ induces a $P_4$ for which
  $(a_1 b_1 c_i  d_i)$ is an antimorphism.  By the  same way, if $a_1$
  misses $c_i$ then $b_1$ sees $d_i$ and we reach the same conclusion.
\end{proof}

We propose a new lemma of  the same flavour that gives more structural
information  on sc-graphs.   To state  it,  we need  a definition.   A
\emph{symmetric partition}  in a graph $G$ is  a partition $(A,B,C,D)$
of $V(G)$ such that each of $A,B,C,D$ is non-empty, there are no edges
between $A,D$,  no edges between  $B,C$, every possible  edges between
$A,B$, and every possible edges between $C, D$.

\begin{lemma}
  \label{l:base} 
  Let  $k\geq  1$  be an  integer  and  $G$  be  an sc-graph  with  an
  antimorphism $\tau = (a_1 b_1 c_1  d_1 a_2 b_2 c_2 d_2 \dots a_k b_k
  c_k d_k)(\dots)\cdots (\dots)$. 

  \noindent Put $A = \{a_1,  \dots, a_k\}$, $B = \{b_1, \dots, b_k\}$,
  $C = \{c_1, \dots, c_k\}$, $D = \{d_1, \dots, d_k\}$.
  Then either:

  \begin{itemize}
  \item 
    There  exists  $i,j  \in  \N$  such  that  $\{a_1,  b_i,  a_{1+j},
    b_{i+j}\}$ induces a $P_4$  for which $(a_1 b_i a_{1+j} b_{i+j})$
    is an antimorphism.

  \item 
    $(A, B, C,  D)$ is a symmetric partition of $G[A\cup  B \cup C \cup
    D]$.
    
  \item 
    $(B, C, D,  A)$ is a symmetric partition of $G[A\cup  B \cup C \cup
    D]$.
  \end{itemize}
\end{lemma}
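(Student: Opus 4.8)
The plan is to compress the adjacency structure of $G[A\cup B\cup C\cup D]$ into Boolean data on $\mathbb{Z}/k\mathbb{Z}$, and then read off the three alternatives. First I would observe that $\tau^{4}$ is an automorphism of $G$ (the square of an antimorphism is an automorphism) which sends $a_m\mapsto a_{m+1}$, $b_m\mapsto b_{m+1}$, $c_m\mapsto c_{m+1}$, $d_m\mapsto d_{m+1}$; hence whether $a_m$ sees $b_n$ depends only on $n-m$ modulo $k$, and one may define $g\colon\mathbb{Z}/k\mathbb{Z}\to\{0,1\}$ by $g(n-m)=1$ iff $a_m$ sees $b_n$, and similarly $h$ with $h(q-p)=1$ iff $a_p$ sees $a_q$. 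Since $\tau$ is an antimorphism sending $A$ onto $B$, ``$a_p$ sees $a_q$'' is equivalent to ``$b_p$ misses $b_q$'', so the adjacencies inside $B$ are the complement of those inside $A$ (and likewise inside $D$ versus $C$). Applying $\tau$ four times in succession turns ``every vertex of $A$ sees every vertex of $B$'' in turn into ``every vertex of $B$ misses every vertex of $C$'', then ``every vertex of $C$ sees every vertex of $D$'', then ``every vertex of $A$ misses every vertex of $D$'' (the shift of the subscripts being a bijection).

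This settles alternatives~2 and~3. If $(A,B,C,D)$ is a symmetric partition then every vertex of $A$ sees every vertex of $B$, i.e.\ $g\equiv 1$; conversely $g\equiv 1$ forces, by the chain just described, no edge between $A$ and $D$, no edge between $B$ and $C$, all edges between $A$ and $B$, all edges between $C$ and $D$, which --- the four sets being non-empty and partitioning $A\cup B\cup C\cup D$ --- is precisely a symmetric partition $(A,B,C,D)$. Dually, $g\equiv 0$ is equivalent to $(B,C,D,A)$ being a symmetric partition. So from now on $g$ is non-constant and I must exhibit the $P_4$ of alternative~1.

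To this end I would unwind alternative~1. The four vertices $a_1,b_i,a_{1+j},b_{i+j}$ are distinct exactly when $j\not\equiv 0\pmod k$, and for such $i,j$ the cyclic permutation $(a_1\,b_i\,a_{1+j}\,b_{i+j})$ is an antimorphism of the graph they induce iff the four pairs $a_1b_i$, $b_ia_{1+j}$, $a_{1+j}b_{i+j}$, $b_{i+j}a_1$ alternate between edges and non-edges, and among the two ``diagonal'' pairs $a_1a_{1+j}$ and $b_ib_{i+j}$ exactly one is an edge. The diagonal condition is automatic: $a_1a_{1+j}$ is an edge iff $h(j)=1$ iff $b_ib_{i+j}$ is a non-edge. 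Expressing the four cyclic adjacencies through $g$ and writing $s=i-1$, the alternation condition is exactly that $\bigl(g(s-j),g(s),g(s+j)\bigr)$ is $(0,1,0)$ or $(1,0,1)$; and whenever it holds, the induced graph consists of a perfect matching of two edges plus one diagonal joining them, hence is a $P_4$. Thus the whole lemma reduces to the following claim: \emph{a non-constant $g\colon\mathbb{Z}/k\mathbb{Z}\to\{0,1\}$ admits $s$ and $j\not\equiv 0$ with $g(s-j)=g(s+j)\neq g(s)$.}

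Finally I would prove this claim. Take a maximal arc on which $g$ is constant (a ``run'') of minimal length $\ell$; replacing $g$ by $1-g$ if necessary --- which preserves the conclusion sought --- assume it is a run of $0$'s at positions $r,r+1,\dots,r+\ell-1$, so that $g(r-1)=g(r+\ell)=1$. Since $g$ is non-constant there are at least two runs, so $\ell\le k-1$; and by minimality the run of $1$'s immediately after it has length at least $\ell$, so $g=1$ on $r+\ell,\dots,r+2\ell-1$. Put $t=\lfloor \ell/2\rfloor$, $s=r+t$, $j=t+1$: then $g(s)=0$, $g(s-j)=g(r-1)=1$, and $g(s+j)=g(r+2t+1)=1$ because $\ell\le 2t+1\le 2\ell-1$, while $1\le j\le \ell\le k-1$ so $j\not\equiv 0\pmod k$. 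This is the pattern $(1,0,1)$, so alternative~1 holds with $i=s+1$. The delicate points I anticipate are exactly the last two: noticing that the diagonal constraint comes for free (so adjacencies inside $A$ and $B$ never need to be controlled beyond being complementary), keeping the index bookkeeping straight when translating alternative~1, and, in the combinatorial claim, dealing with a run of even length --- there $s$ cannot be the exact centre of the run, and one is forced to take $j$ just large enough to step past its boundary, which is why the run is chosen of minimal length.
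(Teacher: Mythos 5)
Your proof is correct, and its overall skeleton matches the paper's: the cases where $A$ is complete or anticomplete to $B$ are dispatched by applying $\tau$ three times, the ``diagonal'' condition (exactly one of $a_1a_{1+j}$, $b_ib_{i+j}$ is an edge) is observed to come for free from $\tau$ being an antimorphism, and everything reduces to exhibiting $i,j$ with $j\not\equiv 0 \pmod k$ such that $a_1b_i\notin E$ while $a_1b_{i-j}, a_1b_{i+j}\in E$ (or the complementary pattern). Where you genuinely diverge is in how you produce such $i,j$. The paper complements $G$ if necessary so that $|N(a_1)\cap B|\ge |\overline{N}(a_1)\cap B|$, picks any $i$ with $a_1b_i\notin E$, and argues by pigeonhole that some $j$ has both $b_{i-j}$ and $b_{i+j}$ adjacent to $a_1$, since otherwise each pair $\{i-j,i+j\}$ would contribute a non-neighbour and $a_1$ would have more than $k/2$ non-neighbours in $B$. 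You instead make explicit the reduction to a statement about non-constant $0/1$-functions on $\Z/k\Z$ (well-defined because $\tau^4$ is an automorphism shifting all subscripts by one) and find the pattern $(1,0,1)$ by taking a run of minimal length $\ell$ and stepping $j=\lfloor \ell/2\rfloor+1$ from its near-centre, landing just before the run on one side and, by minimality of $\ell$, inside the following run on the other. Both arguments are short; the paper's counting is lighter on index bookkeeping, while your formulation cleanly isolates the combinatorial content of the lemma from the graph theory. I checked the points you flagged as delicate --- the automatic diagonal, the translation $s=i-1$, and the even-length-run case where $2t+1$ must lie in $[\ell,2\ell-1]$ --- and they all hold, as does $j\le\ell\le k-1$, so $j\not\equiv 0\pmod k$.
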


\begin{proof}
  If  every vertex  in $A$  sees every  vertex in  $B$,  then applying
  $\tau$  three times,  we  see that  $(A,  B, C,  D)$  is a  symmetric
  partition of $G[A\cup B \cup C \cup D]$. Similarly, if every vertex
  in $A$ misses every vertex in $B$  then $(B, C, D, A)$ is a symmetric
  partition of $G[A\cup  B \cup C \cup D]$.  Thus,  we may assume that
  some vertex $a_h$ in $A$  has neighbours and non-neighbours in $B$,
  and applying  $\tau^{4(h-1)}$, we see that $a_1$  has neighbours and
  non-neighbours in $B$.

  Suppose  first that  $a_1$  has  at least  as  many neighbours  than
  non-neiboughs  in  $B$,  more   precisely:  $|N(a_1)  \cap  B|  \geq
  |\overline{N}(a_1) \cap  B|$. Let $i$  be such that $a_1  b_i \notin
  E(G)$.   There exists  $j  \not \equiv  0  \pmod k$  such that  $a_1
  b_{i-j}  \in   E$  and   $  a_1  b_{i+j}   \in  E$,   for  otherwise
  $|\overline{N}(a_1)  \cap  B|  >  k/2  \geq |N(a_1)  \cap  B|$  ,  a
  contradiction. Note  that we  may have $b_{i-j}  = b_{i+j}$  if $i-j
  \equiv i+j \pmod k$

  We  already  know  $a_1  b_i  \notin E$.   Applying  $\tau^{4j}$  to
  $a_1b_i$ we know  $a_{1+j} b_{i+j} \notin E$.  We  already know $a_1
  b_{i-j}  \notin E$.   Applying $\tau^{4j}$  to $a_1b_{i-j}$  we know
  $a_{1+j}  b_{i}  \in  E$.  If  $a_1$ sees  $a_{1+j}$  then  applying
  $\tau^{1+4(i-1)}$, $b_i$ misses  $b_{i+j}$ and $\{a_1, b_i, a_{1+j},
  b_{i+j}\}$ induces a $P_4$ for  which $(a_1 b_i a_{1+j} b_{i+j})$ is
  an   antimorphism.   If   $a_1$  misses   $a_{1+j}$   then  applying
  $\tau^{1+4(i-1)}$,  $b_i$  sees  $b_{i+j}$  and we  reach  the  same
  conclusion.

  We   are  left   with  the   case  where   $|N(a_1)  \cap   B|  \leq
  |\overline{N}(a_1) \cap B|$.  But then, the proof is similar up to a
  complementation of $G$.
\end{proof}

Note that if $(A,B,C,D)$ is  a symmetric partition then for any $i,j,l
\in \N$, the  set $\{a_i, b_{i+j}, c_{l}, d_{l+j}\}$  induces a $P_4$.
Because by  the definition of  symmetric partitions, we have  $a_i b_i
\in E$, $b_i c_i  \notin E$, $c_i d_i \in E$, $d_i  a_i \notin E$, and
applying $\tau$, exactely one of $a_i  c_i$, $b_i d_i$ is an edge.  If
$(B,C,D,A)$  is a symmetric  partition we  reach the  same conclusion.
This remark allows us to follow the lines of Gibbs, and to prove again
his theorem  (Theorem~\ref{th:gibbs}) using Lemma~\ref{l:base} instead
of Lemma~\ref{l:gibbs}. Let us do it for the sake of completeness.

Consider an sc-graph on $4k$  vertices and an antimorphism $\tau$.  By
theorem~\ref{th:po2}  we may  assume that  every cycle  of  $\tau$ has
length a power of~2. Let  us consider a circular permutation $(a_1 b_1
c_1 d_1 a_2  b_2 c_2 d_2 \dots a_k  b_k c_k d_k)$ of $\tau$.  Put $A =
\{a_1, \dots a_k\}$, $B = \{b_1,  b_2, \dots, b_k\}$, $C = \{c_1, c_2,
\dots, c_k\}$, $D  = \{d_1, d_2, \dots, d_k\}$.  We  claim that we may
partition $A\cup  B\cup C \cup  D$ in sets  of 4~vertices all  of them
inducing a $P_4$, thus proving the theorem.

We have  $\tau = (a_1 b_1  c_1 d_1 a_2 b_2  c_2 d_2 \dots  a_k b_k c_k
d_k)(\dots)\cdots  (\dots)$.  Apply  Lemma~\ref{l:base}.   If  one  of
$(A,B,C,D)$,  $(B,C,D,A)$ is  a  symmetric partition,  then for  every
$i\in \N$, $\{a_i, b_i, c_i, d_i\}$  induces a $P_4$ and we may easily
partition $A\cup B\cup  C \cup D$ into sets of  4~vertices all of them
inducing a $P_4$. So we are  left with the case where there exists $i$
such that $\{a_1, b_i, a_{1+j},  b_{i+j}\}$ induces a $P_4$ for which
$(a_1  b_i a_{1+j}  b_{i+j})$  is  an antimorphism.   Let  us put  $k=
2^{\alpha}$, $\alpha \geq 1$.  Note that for any $l$, $\{a_l, b_{l+i},
a_{l+j}, b_{l+i+j} \}$ induces a $P_4$ (apply $\tau^{4(l-1)}$) that we
denote by $P^l$.

We  claim that  we can  choose $l_1,  l_2, \dots,  l_{k/2}$  such that
$(P^{l_1},  P^{l_2}, \dots, P^{l_{k/2}})$  partitions $A\cup  B$.  For
that purpose it suffices to prove that for any integer $j \not\equiv 0
\pmod {2^\alpha}$,  $\Z_{2^\alpha}$ can  be partitioned into  pairs of
the form $\{l, l+j\}$.   For $\alpha = 1$ this can be  done, so let us
prove it by  induction.  If $j$ is even, then let  us partition by the
induction hypothesis $\Z_{2^{\alpha-1}}$ into  pairs of the form $\{l,
l+j/2\}$: $\Z_{2^{\alpha-1}}  = \cup_i \{l_i,  l_i + j/2\}$.   Then we
have  $\Z_{2^{\alpha}} =  \cup_i [\{2l_i,  2l_i +  j\}  \cup \{2l_i+1,
2l_i+1  +  j\}]$,  so  we  manage to  partition  $\Z_{2^{\alpha}}$  as
desired.   If  $j$ is  odd,  then starting  from  $1$  and adding  $j$
successively, we build a Hamiltonian cycle going through every element
of its vertex  set $\Z_{2^{\alpha}}$.  Take then every  second edge of
this   Hamiltonian  cycle:   here   are  the   pairs  that   partition
$\Z_{2^{\alpha}}$ as  desired.  So $A\cup  B$ may be  partitioned into
$P_4$'s.   Applying  $\tau^2$, we  see  that  $C\cup  D$ can  also  be
partitioned  into  $P_4$'s,  hence  $A\cup  B\cup C  \cup  D$  can  be
partitioned into $P_4$'s.

\section{A theorem and a conjecture}
\label{sec:conj}

A \emph{skew partition}  in a graph $G$ is  a partition $(A,B,C,D)$ of
$V(G)$ such  that each of $A,B,C,D$  is non-empty, there  are no edges
between $A,B$ and every possible edges between $C,D$. We are now able
to prove the following:
  
\begin{theorem}
  \label{th:m}
  Let  $G$ be  an sc-graph  with an  antimorphism $\tau$  that  is the
  product of two circular permutations,  one of them of length~4. Then
  either:

  \begin{itemize}
  \item
    $G$ contains a $C_5$ as an induced subgraph;
  \item
    $G$ contains a skew partition;
  \item 
    $G$ contains a symmetric partition.
  \end{itemize}

\end{theorem}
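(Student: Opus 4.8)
The plan is to understand how the length‑$4$ cycle of $\tau$ is attached to the rest of $G$, and then to apply Lemma~\ref{l:base} to that rest. By the theorem of Sachs and Ringel the second circular permutation of $\tau$ has length a multiple of $4$, or length $1$; in the latter case $|V(G)|=5$, so $G$ is $C_5$ or the bull and the conclusion is immediate (the bull has the skew partition with no edge between two of the degree‑one vertices and all edges between the remaining three). So write $\tau=(p_1p_2p_3p_4)(a_1b_1c_1d_1\cdots a_kb_kc_kd_k)$ with $k\ge 1$, put $P=\{p_1,\dots,p_4\}$, take $A,B,C,D$ as in Lemma~\ref{l:base}, and $Q=A\cup B\cup C\cup D$. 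Since $\tau|_P$ is an antimorphism of $G[P]$, the graph $G[P]$ is a self‑complementary graph on $4$ vertices, i.e.\ a $P_4$; after relabelling the $p_i$ cyclically I may assume $G[P]$ is the path $p_2-p_1-p_3-p_4$ (with non‑edges $p_1p_4,\,p_2p_3,\,p_2p_4$). The key remark is that for $v\in Q$ one has $p_1v\in E\Leftrightarrow p_2\tau(v)\notin E\Leftrightarrow p_3\tau^2(v)\in E\Leftrightarrow p_4\tau^3(v)\notin E\Leftrightarrow p_1\tau^4(v)\in E$, and $\tau^4$ permutes each of $A,B,C,D$ cyclically; hence each $p_i$ is complete or anticomplete to each of $A,B,C,D$. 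Encoding the adjacencies of $p_1$ by a ``type'' $(\alpha,\beta,\gamma,\delta)\in\{0,1\}^4$ ($\alpha=1$ meaning $p_1$ complete to $A$, and so on), applying $\tau$ shows that $p_2,p_3,p_4$ have types $(1{-}\delta,1{-}\alpha,1{-}\beta,1{-}\gamma)$, $(\gamma,\delta,\alpha,\beta)$, $(1{-}\beta,1{-}\gamma,1{-}\delta,1{-}\alpha)$.

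Now apply Lemma~\ref{l:base} to $(G,\tau)$: either (i) $G[Q]$ has an induced $P_4$ on some $\{a_1,b_i,a_{1+j},b_{i+j}\}$ admitting $(a_1\,b_i\,a_{1+j}\,b_{i+j})$ as antimorphism, or (ii) $(A,B,C,D)$ is a symmetric partition of $G[Q]$, or (iii) $(B,C,D,A)$ is; case (iii) reduces to (ii) by cyclically renaming the $Q$‑cycle (which rotates the type of $p_1$), and in case (i) one may take the two $A$‑vertices to be the ``middle'' of the $P_4$ (the opposite sub‑case being symmetric via complementation). What remains is a case analysis on the type of $p_1$, the available symmetries being cyclic relabelling of $P$, cyclic renaming of the $Q$‑cycle, and complementation, which I expect to reduce the work to a handful of types.

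The four ``diagonal'' types $0000,1010,0101,1111$ are settled using only $P$ and $G[P]$, regardless of $G[Q]$. For type $1111$, $p_1,p_3$ are complete and $p_2,p_4$ anticomplete to $Q$, so $(\{p_2,p_4\},\,Q,\,\{p_1\},\,\{p_3\})$ is a skew partition (using $p_1p_3\in E$). For type $0000$, symmetrically, any $w\in Q$ gives the induced $C_5$ $w-p_2-p_1-p_3-p_4-w$. For type $1010$ (resp.\ $0101$), all of $P$ is complete to $R:=A\cup C$ (resp.\ $B\cup D$) and anticomplete to $S:=Q\setminus R$, and then $(\{p_2\},\,\{p_4\}\cup S,\,\{p_1\}\cup R,\,\{p_3\})$ is a skew partition (no edge between the first two parts since $p_2$ is anticomplete to $\{p_4\}\cup S$; all edges between the last two since $p_3$ is complete to $\{p_1\}\cup R$ and $p_1p_3\in E$). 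For the remaining types in case (ii), ``type $\notin\{0000,1010,0101,1111\}$'' is exactly ``$\alpha\ne\gamma$ or $\beta\ne\delta$'', and then $p_1$ can be placed into one of the four classes of the symmetric partition of $G[Q]$ (belonging to the class of $A$ asks the vertex to be complete to $B$ and anticomplete to $D$, i.e.\ $\beta=1,\delta=0$; the others are cyclic); after renaming the $Q$‑cycle so that this class is $A$, I put $p_i$ into the $i$‑th class and verify that $(A\cup\{p_1\},\,B\cup\{p_2\},\,C\cup\{p_3\},\,D\cup\{p_4\})$ is a symmetric partition of $G$ — the four new adjacency conditions among the $p_i$ reduce precisely to the edges and non‑edges of $G[P]$.

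For the remaining types in case (i), I would compute which $p_\ell$ is complete to the two ends and anticomplete to the middle of the $P_4$ $q_1-q_2-q_3-q_4$ produced by Lemma~\ref{l:base} (or complete to the middle and anticomplete to the ends, or which $q_\ell$ plays the analogous role with respect to $p_2-p_1-p_3-p_4$); adding that single vertex to the corresponding $P_4$ yields an induced $C_5$ for every type outside $\{0000,1010,0101,1111,1000,1110\}$, and the two leftover types are finished by hand. For $1000$, the vertex $q_1\in B$ is complete to $\{p_2,p_4\}$ and anticomplete to $\{p_1,p_3\}$, so $q_1-p_2-p_1-p_3-p_4-q_1$ is an induced $C_5$; for $1110$, $p_1$ is complete to $A\cup B\cup C$ while $p_2$ is anticomplete to $\{p_3,p_4\}\cup D$, so $(\{p_2\},\,\{p_3,p_4\}\cup D,\,\{p_1\},\,A\cup B\cup C)$ is a skew partition. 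The main obstacle is the bookkeeping of the symmetries — keeping straight what each of cyclic relabelling of $P$, renaming of the $Q$‑cycle, and complementation does to the type of $p_1$, to the shape of $G[P]$, and to the $P_4$ delivered by the lemma — together with the genuinely special diagonal types $1010/0101$, where all four vertices of $P$ attach to $Q$ in exactly the same way, so that neither extending a symmetric partition of $G[Q]$ nor pushing a $C_5$ through the lemma's $P_4$ is available and the ad hoc skew partition above is what saves the day.
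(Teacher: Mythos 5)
Your overall architecture is the same as the paper's: reduce $G[P]$ to a normalized $P_4$, observe that each $p_i$ is complete or anticomplete to each of $A,B,C,D$, and run a $16$-type case analysis using Lemma~\ref{l:base}. The diagonal types $0000,1111,1010,0101$ are handled correctly. But the rest of the case analysis has genuine gaps. First, a concrete error: for type $1000$ the type of $p_2$ is $(1-\delta,1-\alpha,1-\beta,1-\gamma)=(1,0,1,1)$, so $p_2$ is \emph{anticomplete} to $B$ and $q_1p_2$ is a non-edge; your claimed $C_5$ $q_1-p_2-p_1-p_3-p_4-q_1$ is not a cycle. Second, your list of types for which the ``$p_\ell$ adjacent to the two ends, non-adjacent to the two middles'' mechanism fails is wrong: such a $p_\ell$ must have $A$-coordinate $0$ and $B$-coordinate $1$, and no $p_\ell$ satisfies this for types $0010$ and $1011$ either, yet these are not in your exceptional list and receive no argument in case (i). Third, the case-(ii) step breaks for the four types where $p_1$ fits only into class $B$ or class $D$ (namely $1000,0010,1101,0111$): ``renaming the $Q$-cycle so that this class is $A$'' by an odd shift turns the symmetric partition $(A,B,C,D)$ into $(B,C,D,A)$, which is \emph{not} a symmetric partition (it has all edges between its first two classes), and indeed the forced placement $p_1\to B$, $p_2\to C$, $p_3\to D$, $p_4\to A$ puts the edge $p_3p_4$ between the two classes that must be anticomplete, so no symmetric extension exists for these types.

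The missing idea — which is exactly how the paper's proof works — is that the lemma is only needed for the four balanced non-diagonal types $1100,0011,1001,0110$. For every type of odd weight (all eight of $1000,0100,\dots,0111$), the pair $\{p_1,p_3\}$ is complete to one of $A\cup C$, $B\cup D$ and anticomplete to the other, and $\{p_2,p_4\}$ behaves oppositely, which yields a skew partition of $G$ of the form $(\{p_1,p_3\},\,B\cup D,\,\{p_2,p_4\},\,A\cup C)$ (or one of its variants) \emph{independently} of the outcome of Lemma~\ref{l:base}; your ad hoc skew partition for $1110$ is a special instance of this general phenomenon. Once those eight types (plus the four diagonal ones) are disposed of directly, both of your mechanisms — the $C_5$ through the lemma's $P_4$ in case (i), and the extension of the symmetric partition by $p_i\to i$-th class in case (ii) — do work for the remaining four types, since there $\beta=1$, $\delta=0$ holds up to an even renaming of the $Q$-cycle. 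As written, your proof leaves types $0010,1011,1101,0111$ uncovered and mishandles $1000$, so it is not complete.
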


\begin{proof}
  Let us call $a,b,c,d$ the four vertices of a cycle of $\tau$. If the
  other cycle  has length~1, then $G$  is the $C_5$ or  the bull which
  has  a skew  partition  (see figure~\ref{base.fig.45}).   So we  may
  assume $\tau = (abcd)(a_1 b_1 c_1  d_1 a_2 b_2 c_2 d_2 \dots a_k b_k
  c_k  d_k)$ with  $k\geq 1$.   Put $A  = \{a_1,  \dots, a_k\}$,  $B =
  \{b_1, \dots,  b_k\}$, $C =  \{c_1, \dots, c_k\}$,  $D=\{d_1, \dots,
  d_k\}$.  Let  us suppose up  to a circular permutation  of $a,b,c,d$
  that the three edges of  $G[a,b,c,d]$ are $ab$, $ac$ and $cd$.  Note
  that if  $a$ sees $a_1$,  then $a$ sees  every vertex in  $A$ (apply
  $\tau^{4i}, i\in \N$).  By the  same way, if $v$ is in $\{a,b,c,d\}$
  and if $H$ is one of $A,B,C,D$, then either $v$ sees every vertex in
  $H$,  or  $v$  misses  every  vertex  in  $H$.   For  every  $v  \in
  \{a,b,c,d\}$, put $N_v  = N(v)\cap(A \cup B\cup C\cup  D)$.  We deal
  now  with the $2^4=16$  following cases,  according to  $N_a$.  Note
  that once $N_a$ is known, $N_b$,  $N_c$ and $N_d$ are also known, by
  applying $\tau$  three times.   For some of  the 16~cases,  we apply
  Lemma~\ref{l:base} to  the cycle $(a_1 b_1  c_1 d_1 a_2  b_2 c_2 d_2
  \dots  a_k  b_k  c_k  d_k)$  of  $\tau$.  Then,  up  to  a  circular
  permutation of $A,B,C,D$, we  may suppose that either $(A,B,C,D)$ is
  a symmetric partition of $G[A \cup B \cup C \cup D]$, or there exist
  $i,j \in  \N$ such that  $\{a_1, b_i, a_{1+j}, b_{i+j}\}$  induces a
  $P_4$ with $a_1 a_{1+j}$ as central edge.
   
  \newpage
  \begin{enumerate}

  \item $N_a= A \cup B$.

    Then $N_b = A  \cup D$, $N_c = C \cup D$ and $N_d  = B \cup C$. If
    $\{a_1,  b_i,  a_{1+j},  b_{i+j}\}$  induces  a  $P_4$  with  $a_1
    a_{1+j}$ as central edge, then $\{b, a_1, b_i, a_{1+j}, b_{i+j}\}$
    induces  a $C_5$. Else,  $(A,B,C,D)$ is  a symmetric  partition of
    $G[A \cup B  \cup C \cup D]$.  We see that $(A \cup  \{a\}, B \cup
    \{b\}, C\cup  \{c\}, D  \cup \{d\})$ is  a symmetric  partition of
    $G$.

  \item $N_a= C \cup D$.

    Then $N_b = B  \cup C$, $N_c = A \cup B$ and $N_d  = A \cup D$. If
    $\{a_1,  b_i,  a_{1+j},  b_{i+j}\}$  induces  a  $P_4$  with  $a_1
    a_{1+j}$ as central edge, then $\{b, a_1, b_i, a_{1+j}, b_{i+j}\}$
    induces a $C_5$. Else, $(A,B,C,D)$ is a symmetric partition of $G[A
    \cup B \cup C \cup D]$. We  see that $(A \cup \{c\}, B \cup \{d\},
    C\cup \{a\}, D \cup \{b\})$ is a symmetric partition of $G$.

  \item $N_a= A \cup C$.

    Then  $N_b = N_c  = N_d  = A  \cup C$.   Thus $(\{b,d\},  B\cup D,
    \{a,c\}, A\cup C)$ is a skew partition.

  \item $N_a= B \cup D$.

    Then  $N_b = N_c  = N_d  = B  \cup D$.   Thus $(\{a,c\},  A\cup C,
    \{b,d\}, B\cup D)$ is a skew partition.

  \item $N_a= A \cup D$.

    Then $N_b = C  \cup D$, $N_c = B \cup C$ and $N_d  = A \cup B$. If
    $\{a_1,  b_i,  a_{1+j},  b_{i+j}\}$  induces  a  $P_4$  with  $a_1
    a_{1+j}$ as central edge, then $\{c, a_1, b_i, a_{1+j}, b_{i+j}\}$
    induces a $C_5$. Else, $(A,B,C,D)$ is a symmetric partition of $G[A
    \cup B \cup C \cup D]$. We  see that $(A \cup \{c\}, B \cup \{d\},
    C\cup \{a\}, D \cup \{b\})$ is a symmetric partition of $G$.

  \item $N_a= B \cup C$.

    Then $N_b = A  \cup B$, $N_c = A \cup D$ and $N_d  = C \cup D$. If
    $\{a_1,  b_i,  a_{1+j},  b_{i+j}\}$  induces  a  $P_4$  with  $a_1
    a_{1+j}$ as central edge, then $\{a, a_1, b_i, a_{1+j}, b_{i+j}\}$
    induces a $C_5$. Else, $(A,B,C,D)$ is a symmetric partition of $G[A
    \cup B \cup C \cup D]$. We  see that $(A \cup \{a\}, B \cup \{b\},
    C\cup \{c\}, D \cup \{d\})$ is a symmetric partition of $G$.

  \item $N_a = \emptyset$.
    
    Then $a_1$ sees $b$, misses $c$  and sees $d$. Thus, $\{a_1, a, b,
    c, d\}$ induces $C_5$.

  \item $N_a = A \cup B \cup C \cup D$.
    \label{case:all}
    Then $N_c  = N_a = A\cup B  \cup C \cup D$.   Thus $(\{b\}, \{d\},
    \{a,c\}, A\cup B \cup C \cup D)$ is a skew partition.

  \item $N_a = A$.

    Then $N_b = A  \cup C \cup D$, $N_c = C$ and $N_d  = A \cup B \cup
    C$.   Thus  $(\{a,c\}, B\cup  D,  \{b,d\},  A\cup  C)$ is  a  skew
    partition.
 
  \item $N_a= B \cup C \cup D$.

    Then $N_b  = B$,  $N_c = A  \cup B  \cup D$ and  $N_d =  D$.  Thus
    $(\{b,d\}, A\cup C, \{a,c\}, B\cup D)$ is a skew partition.
   
  \item $N_a= B$.

    Then $N_b = A  \cup B \cup D$, $N_c = D$ and $N_d  = B \cup C \cup
    D$.   Thus  $(\{a,c\}, A\cup  C,  \{b,d\},  B\cup  D)$ is  a  skew
    partition.

  \item $N_a= A \cup C \cup D$.

    Then $N_b  = C$,  $N_c = A  \cup B  \cup C$ and  $N_d =  A$.  Thus
    $(\{b,d\}, B\cup D, \{a,c\}, A\cup C)$ is a skew partition.

  \item $N_a= C$.

    Then $N_b = A  \cup B \cup C$, $N_c = A$ and $N_d  = A \cup C \cup
    D$.   Thus  $(\{a,c\}, B\cup  D,  \{b,d\},  A\cup  C)$ is  a  skew
    partition.

  \item $N_a= A \cup B \cup D$.

    Then $N_b  = D$,  $N_c = B  \cup C  \cup D$ and  $N_d =  B$.  Thus
    $(\{b,d\}, A\cup C, \{a,c\}, B\cup D)$ is a skew partition.

  \item $N_a= D$.

    Then $N_b = B  \cup C \cup D$, $N_c = B$ and $N_d  = A \cup B \cup
    D$.   Thus  $(\{a,c\}, A\cup  C,  \{b,d\},  B\cup  D)$ is  a  skew
    partition.

  \item $N_a= A \cup B \cup C$.

    Then $N_b  = A$,  $N_c = A  \cup C  \cup D$ and  $N_d =  C$.  Thus
    $(\{b,d\}, B\cup D, \{a,c\}, A\cup C)$ is a skew partition.

  \end{enumerate}
\end{proof}

Note   that  as  pointed   out  by   Farrugia,  a   generalisation  of
Case~\ref{case:all} of  the proof was implicitly known  by Akiyama and
Harary~\cite{akiyama.harary:81}.  They proved  that if an sc-graph $G$
has at least an end-vertex,  then $G$ has exactly two end-vertices $b,
d$  and exactly  two cut  vertices $a,c$.   They proved  that $(\{b\},
\{d\},  \{a,c\}, V(G)\setminus\{a,b,c,d\})$ is  then a  skew partition
of~$G$.
 
Let  us  now  discuss  the  motivivation and  possible  extensions  of
Theorem~\ref{th:m}.  Skew partitions  were introduced by Chv\'atal for
the  study of  perfect graphs~\cite{chvatal:starcutset},  and  play an
important  role in  the proof  of Strong  Perfect Graph  Conjecture by
Chudnovsky,              Robertson             Seymour             and
Thomas~\cite{chudvovsky.r.s.t:spgt}. Symmetric  partitions may be seen
as  a very particular  case of  the 2-join  defined by  Cunningham and
Cornu\'ejols,    once    again    for    the    study    of    perfect
graphs~\cite{cornuejols.cunningham:2join}. A~\emph{2-join} in $G$ is a
partition  $(X_1,  X_2)$  of  $V(G)$  so  that  there  exist  disjoint
non-empty $A_i, B_i \subset X_i$, ($i=1, 2$) satisfaying:

\begin{enumerate} 
\item
  every vertex of $A_1$ is adjacent to every vertex of $A_2$ and every
  vertex of $B_1$ is adjacent to every vertex of $B_2$;
\item
  there are no other edges between $X_1$ and $X_2$;
\item 
  for  $i= 1, 2$,  every component  of $G[X_i]$  meets both  $A_i$ and
  $B_i$;
\item
  for $i= 1, 2$,  if $|A_i| = |B_i| = 1$, and  if $X_i$ induces a path
  of $G$ joining the vertex of  $A_i$ and the vertex of $B_i$, then it
  has length at least~3.
\end{enumerate}

The  conditions 3,  4  are called  the \emph{technical  requirements}.
They  are important for  algorithms, and  for applications  to perfect
graphs.  If  a graph $G$  has a 2-join  with the above  notation, then
$(A_1, A_2,  B_1, B_2)$  is a symmetric  partition of $G[A_1  \cup A_2
\cup  B_1 \cup  B_2]$.  In  other words,  if we  forget  the technical
requirements, symmetric  partitions may be  seen as 2-joins  such that
$X_i \setminus (A_i \cup B_i) = \emptyset$, $i=1,2$.

A lot of work has been done recently on finding algorithms that decide
if the  vertices of  a graph can  be partitioned into  several subsets
with           various          restrictions           on          the
adjacencies~\cite{cameron.e.h.s:listpartition,dantas.f.g.k:partition,feder.h.k.m:partition}).
Symmetric partitions  are detectable in linear  time (see problem~(31)
in~\cite{dantas.f.g.k:partition}).    Skew    partitions   seem   more
difficult,  but   Figueiredo,  Klein,  Kohayakawa  and   Reed  gave  a
polynomial time  algorithm that decides whether  a graph has  or not a
skew partition~\cite{figuereido.k.k.r:sp}. Note that detecting a $C_5$
in a graph  can be done easily in $O(n^5)$. Thus,  each of the outcome
of Theorem~\ref{th:m}  are testable in polynomial  time. We conjecture
that Theorem~\ref{th:m} holds for every sc-graph on $4k$ vertices:

\begin{conjecture}
  \label{conj}
  Let $G$ be an sc-graph on $4k$ vertices. Then either:

  \begin{itemize}
  \item
    $G$ contains a $C_5$ as an induced subgraph;
  \item
    $G$ contains a skew partition;
  \item 
    $G$ contains a symmetric partition.
  \end{itemize}
\end{conjecture}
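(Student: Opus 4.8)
The plan is to turn the rigid way the length‑$4$ cycle of $\tau$ sits in $G$ into a small finite case analysis. If the second cycle of $\tau$ has length~$1$, then $G$ is the $C_5$ or the bull of Figure~\ref{base.fig.45}, and the bull has a skew partition, so we may assume $\tau=(abcd)(a_1b_1c_1d_1\cdots a_kb_kc_kd_k)$ with $k\geq 1$ and put $A,B,C,D$ as in Lemma~\ref{l:base}. The first observation is an \emph{all‑or‑nothing} property: for $v\in\{a,b,c,d\}$ and a block $H\in\{A,B,C,D\}$, the powers $\tau^{4i}$ fix $v$ and permute $H$ transitively, so $v$ sees all of $H$ or misses all of $H$. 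Hence $N_v:=N(v)\cap(A\cup B\cup C\cup D)$ is a union of blocks; and since $\tau$ is an antimorphism with $a\mapsto b\mapsto c\mapsto d\mapsto a$, fixing $N_a$ determines $N_b,N_c,N_d$. Up to a circular relabelling of $\{a,b,c,d\}$ we may also assume the edges of $G[\{a,b,c,d\}]$ are $ab$, $ac$, $cd$.

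The second step is to apply Lemma~\ref{l:base} to the long cycle $(a_1b_1c_1d_1\cdots a_kb_kc_kd_k)$ of $\tau$. After a circular relabelling of the blocks, either $(A,B,C,D)$ is a symmetric partition of $G[A\cup B\cup C\cup D]$ (so $\{a_i,b_i,c_i,d_i\}$ induces a $P_4$ for every $i$), or there are $i,j\in\N$ with $\{a_1,b_i,a_{1+j},b_{i+j}\}$ inducing a $P_4$ whose central edge $a_1a_{1+j}$ lies inside $A$ and whose two endpoints $b_i,b_{i+j}$ lie inside $B$; the proof of Lemma~\ref{l:base} moreover records the remaining adjacencies, in particular $a_1a_{1+j}\in E(G)$ and $a_1b_i\notin E(G)$.

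The third step is the case analysis on $N_a$, which is one of $2^4=16$ unions of blocks. The key dichotomy is whether $N_a$ is one of the four \emph{aligned} pairs $A\cup B$, $B\cup C$, $C\cup D$, $D\cup A$ or not. If it is \emph{not} aligned — twelve of the sixteen possibilities — then one checks from the values of $N_a,N_b,N_c,N_d$ that one of the pairs $\{a,c\}$, $\{b,d\}$ is anticomplete to one of $A\cup C$, $B\cup D$ while the complementary pair is complete to the other, so that $(\{a,c\},\{b,d\},A\cup C,B\cup D)$ — with the two pairings chosen accordingly — is a skew partition of $G$; this is the ``generic'' outcome, and it recovers and generalises the Akiyama--Harary observation about sc‑graphs with an end‑vertex \cite{akiyama.harary:81}. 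If $N_a$ \emph{is} aligned, then in the symmetric‑partition outcome of the second step one adjoins $a,b,c,d$ to the four blocks in the appropriate cyclic order and checks, using the tabulated $N_a,N_b,N_c,N_d$ and the edges $ab,ac,cd$, that $(A\cup\{\cdot\},B\cup\{\cdot\},C\cup\{\cdot\},D\cup\{\cdot\})$ is a symmetric partition of $G$; and in the $P_4$ outcome, exactly one of $a,b,c,d$ sees $B$ and misses $A$, hence is adjacent to both endpoints and to neither interior vertex of the $P_4$ of Lemma~\ref{l:base}, so adjoining that vertex produces an induced $C_5$.

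The main obstacle is entirely the bookkeeping of step three. For the non‑aligned cases one must verify in each the existence of the right anticomplete/complete pairing (equivalently, that $a$ and $c$, or $b$ and $d$, have their block‑neighbourhoods confined to one of $A\cup C$, $B\cup D$); for the aligned cases one must identify the unique vertex of $\{a,b,c,d\}$ that closes the $P_4$ and, in the symmetric‑partition subcase, check every prescribed edge and non‑edge, including those incident to the singletons $\{a\},\{b\},\{c\},\{d\}$. None of this is deep, but it is error‑prone, and it is worth noting that an aligned value of $N_a$ genuinely seems to admit neither a skew partition nor an extension of a symmetric partition without first invoking Lemma~\ref{l:base}, which is exactly why both outcomes of that lemma are needed.
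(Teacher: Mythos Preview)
Your argument is a correct (and tidily organised) proof of Theorem~\ref{th:m}, not of Conjecture~\ref{conj}. From the first line you assume that $\tau$ has a cycle of length~$4$ and exactly one other cycle; the all-or-nothing property, the $16$-case analysis on $N_a$, and the appeal to Lemma~\ref{l:base} all rest on that hypothesis. A general sc-graph on $4k$ vertices need not admit such an antimorphism: even after invoking Theorem~\ref{th:po2}, the shortest cycle of $\tau$ may have length~$8$ or more, and there may be many cycles. In those situations there is no quartet $\{a,b,c,d\}$ fixed setwise by $\tau^4$ to run your block argument against, and the whole machinery collapses. The paper itself does not prove Conjecture~\ref{conj}; it states it as open and offers Theorem~\ref{th:m} as supporting evidence. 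Your write-up is essentially the paper's own proof of Theorem~\ref{th:m}, with the pleasant cosmetic improvement of grouping the twelve ``non-aligned'' values of $N_a$ under a single skew-partition template $(\{a,c\},\{b,d\},A\cup C,B\cup D)$ rather than listing them one by one.

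One small remark on the aligned cases: your description of the $C_5$ step is the right one --- the vertex you adjoin must see the two \emph{endpoints} (in $B$) and miss the two \emph{interior} vertices (in $A$) of the $P_4$ produced by Lemma~\ref{l:base}, since closing a $P_4$ into a $C_5$ requires adjacency to its ends. Be aware that in some presentations (including the paper's case list) the named vertex is the one seeing $A$ and missing $B$, which yields a triangle with two pendants rather than a $C_5$; your formulation avoids that slip, but you should still record explicitly, for each aligned value of $N_a$, \emph{which} of $a,b,c,d$ plays this role.
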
 

This conjecture is motivated by several considerations.  First, we are
able to prove it in  a quite general special case: Theorem~\ref{th:m}.
The proof shows  how forbiding $C_5$'s can help  for finding symmetric
or skew  partitions.  Also,  skew partitions and  symmetric partitions
arise naturally  in $P_4$-constructions  of sc-graphs and  in circular
permutations of an antimorphism.  Suppose  $(a_1 b_1 c_1 d_1 \dots a_k
b_k c_k d_k)$ is such a permutation with our usual notation.  If every
vertex  in  $A$  sees every  vertex  in  $B$,  then $(A,B,C,D)$  is  a
symmetric partition of $G[A\cup B \cup  C \cup D]$. If every vertex in
$A$ sees every vertex in $C$, then  $(B, D, C, A)$ is a skew partition
of $G[A\cup B \cup C \cup D]$.

Secondly,  the   conjecture  has  an  analogy  with   the  theorem  of
Chudnovsky,  Robertson,  Seymour  and  Thomas  for  decomposing  Berge
Graphs.  A  \emph{hole} in a  graph is an  induced cycle of  length at
least~4. A graph  is \emph{Berge} if in both  $G, \overline{G}$, there
is no hole  of odd length. The \emph{decomposition  theorem} for Berge
graphs is the following. Note that this theorem has been proved in two
steps:      first     Chudnovsky,      Robertson,      Seymour     and
Thomas~\cite{chudvovsky.r.s.t:spgt}  proved a slightly  weaker result,
and then Chudnovsky~\cite{chudnovsky:trigraphs}  alone proved the form
that we give:

\begin{theorem}[Chudnovsky et al.\cite{chudnovsky:trigraphs,chudvovsky.r.s.t:spgt}]
  \label{th.berge}
  Let $G$ be a Berge graph. Then either~:
  \begin{itemize}
    \item 
      One of $G, \overline{G}$ is bipartite.
    \item
      One of $G, \overline{G}$ is the line-graph of a bipartite graph.
    \item
      One of $G, \overline{G}$ has a 2-join.
    \item
      $G$ has a skew partition.
  \end{itemize}
\end{theorem}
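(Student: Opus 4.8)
The plan is to prove the contrapositive structural statement: a Berge graph $G$ admitting none of the listed decompositions --- that is, neither $G$ nor $\overline{G}$ has a $2$-join, and $G$ has no skew partition --- must be \emph{basic}, meaning that one of $G,\overline{G}$ is bipartite or is the line graph of a bipartite graph. In the full argument one must also allow the class of \emph{double split graphs} among the basic outcomes and work with the more robust notion of a \emph{balanced} skew partition; I will suppress these refinements in the sketch and recover the exact form quoted here only at the end, via Chudnovsky's strengthening \cite{chudnovsky:trigraphs} of the original result \cite{chudvovsky.r.s.t:spgt}. The entire argument is driven by the definition of Berge: every induced cycle of odd length at least~$5$ in $G$ and every such cycle in $\overline{G}$ is forbidden, and one repeatedly exploits the parity of induced paths running between fixed vertex sets.

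First I would assemble the two essential tools. The first is the \emph{Roussel--Rubio lemma}, which tightly constrains how an induced odd path can attach to an anticonnected set $A$: if the two ends of the path are complete to $A$ but no interior vertex is, then the path must be very short, or some single vertex of $A$ must be complete to the whole path. The second is a package of elementary \emph{parity lemmas} that, again by counting odd holes and odd antiholes, forbid most ways an outside vertex or path can see a fixed hole. Both tools share the same mechanism: they force any external vertex to attach to a distinguished induced structure in one of only a bounded list of admissible patterns, so that the vertices attaching ``in the same pattern'' can later be grouped into the sides of a decomposition.

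The heart of the proof is then a long case analysis organised by the most complicated induced configuration that $G$ contains. I would split into: (i) $G$ or $\overline{G}$ contains a \emph{prism} (two triangles joined by three disjoint induced paths) whose paths have inconvenient length or parity; (ii) $G$ contains a \emph{proper wheel} (a hole together with a vertex attached to it in a non-degenerate way); and (iii) $G$ contains neither. In cases~(i) and~(ii) the two tools localise either a $2$-join across the rungs of the prism (or the spokes of the wheel) or a skew partition separating the special structure from the rest, the decomposition sides being exactly the groups of ``like-attaching'' vertices. Case~(iii) is the degenerate case: with no prism and no proper wheel, the local behaviour of holes is so restricted that $G$ must be assembled from line-graph-of-bipartite pieces, yielding the basic conclusion (or, after passing to $\overline{G}$, a split-like graph).

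The principal obstacle is, without question, cases~(i) and~(ii): prisms and wheels each fan out into many subconfigurations according to lengths and parities, and for every one of them the decomposition must be produced explicitly and its defining adjacency conditions verified by hand. Making precise the ``attaches in one of few patterns'' claim --- bounding the possible neighbourhoods of outside vertices on the special structure --- is where essentially all of the difficulty lies, and where the full strength of the Roussel--Rubio lemma is needed. A secondary obstacle, addressed only in the refinement \cite{chudnovsky:trigraphs}, is to upgrade an arbitrary skew partition to a balanced one and to eliminate the auxiliary decompositions (homogeneous pairs and $M$-joins) that the first pass inevitably produces; this last reduction is precisely what separates the weaker statement of \cite{chudvovsky.r.s.t:spgt} from the clean four-way form stated above.
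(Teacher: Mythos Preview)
The paper does not contain a proof of this theorem. Theorem~\ref{th.berge} is quoted as a known result of Chudnovsky, Robertson, Seymour and Thomas (with the strengthened form due to Chudnovsky) and is used only as motivation for Conjecture~\ref{conj}; the paper gives no argument for it whatsoever. So there is no ``paper's own proof'' to compare your proposal against.

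That said, your sketch is a fair high-level summary of the actual strategy in \cite{chudvovsky.r.s.t:spgt} and \cite{chudnovsky:trigraphs}: the Roussel--Rubio lemma, the attachment/parity lemmas for holes, and a long case analysis organised around substantial induced substructures (line graphs of bipartite subdivisions of $K_4$, long prisms, even prisms, double diamonds, wheels, and so on), with Chudnovsky's trigraph work removing the homogeneous-pair and $M$-join outcomes. Two caveats: first, your trichotomy ``prism / proper wheel / neither'' is a considerable simplification of the real case structure, which is both finer and differently ordered; second, calling this a ``proof proposal'' is generous --- the actual argument runs to well over a hundred pages, and nothing short of reproducing that case analysis constitutes a proof. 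For the purposes of this paper, the theorem should simply be cited, not proved.
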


It would be nice to have  a stronger theorem in the particular case of
Berge sc-graphs. Conjecture~\ref{conj} could be a candidate.  

\section*{Aknowledgements}

Many thanks to Fr\'ed\'eric Maffray for his encouragement and carefull
reading, and to Alastair Farrugia for numerous suggestions.


\providecommand{\bysame}{\leavevmode\hbox to3em{\hrulefill}\thinspace}
\providecommand{\MR}{\relax\ifhmode\unskip\space\fi MR }
\providecommand{\MRhref}[2]{%
  \href{http://www.ams.org/mathscinet-getitem?mr=#1}{#2}
}
\providecommand{\href}[2]{#2}

\begin{figure}[p]
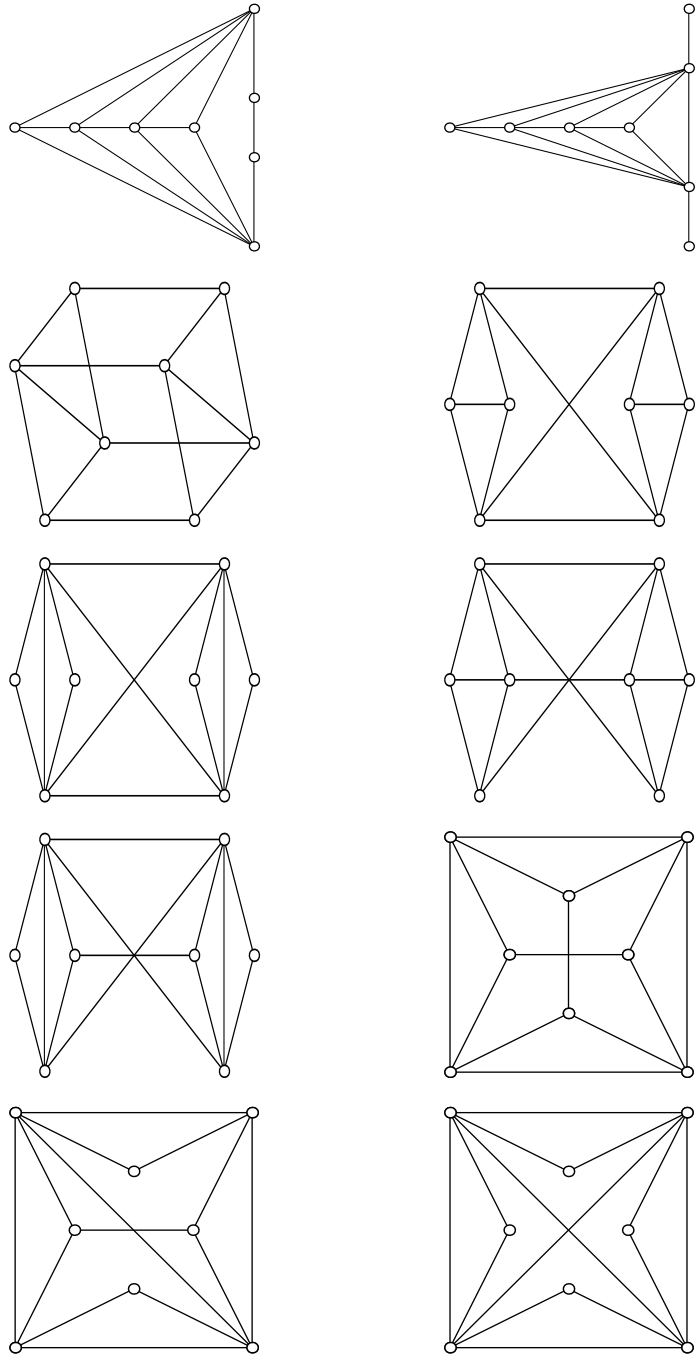

  \caption{The 10 sc-graphs on 8 vertices\label{fig8}}
 \center
\begin{tabular}{cc}
\includegraphics[width=3.3cm,height=3.3cm]{fig.autocomp.1}\rule{2cm}{0cm}\rule{0cm}{3.5cm}
&\includegraphics[width=3.3cm,height=3.3cm]{fig.autocomp.2}\\
\includegraphics[width=3.3cm,height=3.3cm]{fig.autocomp.3}\rule{2cm}{0cm}\rule{0cm}{3.5cm}
&\includegraphics[width=3.3cm,height=3.3cm]{fig.autocomp.4}\\
\includegraphics[width=3.3cm,height=3.3cm]{fig.autocomp.5}\rule{2cm}{0cm}\rule{0cm}{3.5cm}
&\includegraphics[width=3.3cm,height=3.3cm]{fig.autocomp.6}\\
\includegraphics[width=3.3cm,height=3.3cm]{fig.autocomp.7}\rule{2cm}{0cm}\rule{0cm}{3.5cm}
&\includegraphics[width=3.3cm,height=3.3cm]{fig.autocomp.8}\\
\includegraphics[width=3.3cm,height=3.3cm]{fig.autocomp.9}\rule{2cm}{0cm}\rule{0cm}{3.5cm}
&\includegraphics[width=3.3cm,height=3.3cm]{fig.autocomp.10}
\end{tabular}
\end{figure}

\end{document}